\newtheorem{example}{Example}
\newtheorem{lemma}{Lemma}
\newtheorem{rem}{Remark}
\newtheorem{cor}{Corollary}
\newtheorem{theorem}{Theorem}
\def\supp{{\rm supp}\,}
\long\def\comment#1{{}}
\providecommand{\keywords}[1]
{
  \small	
  \textbf{\textit{Keywords---}} #1
}
\begin{document}
\title{Positivity of Tur\'an determinants for orthogonal polynomials II}
\author{Ryszard Szwarc}
\date{}

\maketitle
\begin{abstract}
The polynomials $p_n$ orthogonal on the interval $[-1,1],$ normalized by $p_n(1)=1,$ satisfy Tur\'an's inequality if
$p_n^2(x)-p_{n-1}(x)p_{n+1}(x)\ge 0$ for $n\ge 1$ and for all $x$ in the
interval of orthogonality.  We give a general criterion for orthogonal
polynomials  to satisfy Tur\'an's inequality.  This extends essentially the results of \cite{szw}. In particular the results can be applied to many classes of orthogonal polynomials, by inspecting their recurrence relation.
\end{abstract}

\footnotetext[1]{\noindent 1991 {\it Mathematics Subject Classification}.
Primary  42C05, 47B39}
\keywords{orthogonal polynomials, Tur\'an
determinants, recurrence formula}
\section{Introduction}
Consider a symmetric probability measure $\mu$ such that $\supp\mu= [-1,1].$  By the Gram-Schmidt orthogonalization procedure applied to the system of monomials $x^n,$ $n\ge 0, $ we obtain a sequence of orthogonal polynomials $p_n(x),$ $n\ge 0.$ Every polynomial $p_n$ is of exact degree $n.$ We may assume that its  leading coefficient is positive.
It is well known that the polynomials $p_n$ satisfy the three term recurrence relation of the form
\begin{equation}\label{recrel}
xp_n= \gamma_n p_{n+1}+\alpha_np_{n-1},\quad n\ge 0,
\end{equation} with convention $\alpha_0=p_{-1}=0.$
Due to orthogonality the polynomial $p_n$ has $n$ roots  in the open interval $(-1,1).$ Therefore $p_n(1)>0.$ Let
$$P_n(x)={p_n(x)\over p_n(1)}, \quad n\ge 0.$$
The coefficients $\gamma_n, \ \alpha_{n+1}$ are positive for  $n\ge 0.$ In case the polynomials $p_n$ are orthonormal then the sequences of the coefficients are related by $\gamma_n=\alpha_{n+1}$ and the recurrence relation simplifies to
$$xp_n=\alpha_{n+1}p_{n+1}+\alpha_np_{n-1}, \qquad n\ge 0.
$$
 We refer to \cite{ch,szego} for the basic theory concerning orthogonal polynomials.

We are interested in determining when 
\begin{equation}\label{main}
\Delta_n(x):=P_n(x)^2-P_{n-1}(x)P_{n+1}(x)\ge 0, \qquad n\ge 1.
\end{equation}
The expression $\Delta_n(x)$ is called the Tur\'an's determinant. The problem has been studied for many classes of specific orthogonal polynomials (see \cite{asc, a1, bi, bi1, g1, g2, ks, mn, sk, szasz, szego1, szego2, tn, tu}. We refer to the introduction in \cite{szw} for a short account of known results.

 Tur\'an determinants can be used 
 to determine the orthogonality measure $\mu$ in terms of orthonormal polynomials  $p_n.$
Paul Nevai \cite{ne} observed
if 
  $\alpha_n\stackrel{n}{\to} 1/2$  then the sequence of measures (perhaps signed) 
$$[p_n^2(x)-p_{n-1}(x)p_{n+1}(x)]\,d\mu(x)$$ is weakly convergent to the measure 
$${2\over \pi} \sqrt{1-x^2}\,dx, \quad |x|<1.$$
M\'at\'e and Nevai \cite{mane} showed that if additionally  sequence $\alpha_n$ has bounded variation then the limit of Tur\'an determinants exists. Moreover the orthogonality measure is absolutely continuous on the interval $(-1,1)$ 
its density is given by
 $${2\sqrt{1-x^2}\over \pi f(x)}, \quad |x|<1,$$
 where 
$$f(x):=\lim_n [p_n^2(x)-p_{n-1}(x)p_{n+1}(x)]>0,\quad |x|<1.$$
 
It turns out that the way we normalize the polynomials is essential for
the Tur\'an inequality to hold.
 Indeed, assume  $p_n$ satisfy (\ref{recrel})
and $p_n(1)=1,$ i.e.
\begin{equation}\label{one}
 \alpha_n+\gamma_n=1.
\end{equation} 
Assume
$$p_n^2(x)-p_{n-1}(x)p_{n+1}(x)\ge  0,\quad | x|\le 1,\ n\ge 1.$$
Define new polynomials by $p^{(\sigma)}_n(x)=\sigma_np_n(x),$ where $\sigma_n$ is a sequence of positive coefficients.  Then the condition
$$\{p^{(\sigma)}_n(x)\}^2-p^{(\sigma)}_{n-1}(x)p^{(\sigma)}_{n+1}(x)\ge
0,\quad|x|\le 1,\ n\ge 1$$
is equivalent to (see Proposition \cite{szw})
$$\sigma_n^2 -\sigma_{n-1}\sigma_{n+1}\ge 0,\quad n\ge 1.$$
This means if the Tur\'an determinants are nonnegative, when the polynomials are normalized at $x=1,$ then they stay nonnegative for any other normalization provided that they are nonnegative at $x=1,$ as $\sigma_n=p_n^{(\sigma)}(1).$

By Theorem 1 \cite{szw} if the polynomials are normalized at $x=1,$ i.e. $p_n(1)=1,$ $\alpha_n$ is increasing and $\alpha_n\le {1\over 2},$ the Tur\'an determinants are positive in the interval $(-1,1).$ This result can be applied to many classes of orthogonal polynomials, including for example the ultraspherical polynomials for which positivity has been obtained in 
\cite{sk, szasz} 

The result mentioned above can be applied provided that we are given the coefficients $\alpha_n$ explicitly. 
For many classes of orthogonal polynomials in the interval $[-1,1]$ we are given recurrence relations, but the values $p_n(1)$ cannot be evaluated in the explicit form. Therefore we are unable to provide a recurrence relation  for the polynomials $P_n(x)=p_n(x)/p_n(1),$ in the form for which we can inspect easily the assumptions of Theorem 1 \cite{szw}. This occurs when
we study the associated polynomials. Indeed assume $p_n$ satisfy (\ref{recrel}) and (\ref{one}). For a fixed natural number the associated polynomials $p_n^{(k)}$ of order $k$ are defined
by
\begin{equation}
xp_n^{(k)}=
\begin{cases}
\gamma_kp_1^{(k)} & n=0,\\
\gamma_{n+k}p_{n+1}^{(k)}+\alpha_{n+k}p_{n-1}^{(k)} & n\ge 1.
\end{cases}
\end{equation}
These polynomials do not satisfy $p_n^{(k)}(1)=1$ as
$$p_1^{(k)}(1)=\gamma_k^{-1}=(1-\alpha_k)^{-1}>1.$$

The obstacle described above has been partially overcome in Corollary 1 of \cite{szw}, but it required additional assumptions, in particular $\gamma_0\ge 1.$ 
Unfortunately many examples including the associated polynomials violate that condition. 
The aim of this note is to provide a counterpart to Corollory 1 \cite{szw}
by allowing $\gamma_0<1.$ This is done in Theorem 1. As the assumptions in this theorem are complicated Corollary 1 provides a wide class of relatively simple recurrence relations for which Theorem 1 applies. General examples are provided at the end of the paper.
\section{Results}
\begin{theorem}
Assume the polynomials $p_n$ satisfy
\begin{equation}\label{1}
xp_n=\gamma_np_{n+1}+\alpha_np_{n-1},\quad n\ge 0,
\end{equation} where $\alpha_0=p_{-1}=0,\ p_0=1.$ 
Assume 
\begin{enumerate}
\item[(a)] the sequence $\alpha_{n}$ is strictly increasing and $\alpha_{n}\le {1/2},$
\item[(b)] the sequence $\gamma_n$ is positive and strictly decreasing ,
\item[(c)] $\alpha_n+\gamma_n\le 1.$
\end{enumerate}
Assume also that  there holds
\begin{eqnarray}
{\alpha_n-\alpha_{n-1}\over \alpha_{n}\gamma_{n-1}-\alpha_{n-1}\gamma_{n}} &\le & 
{\alpha_{n+1}\gamma_n-\alpha_n\gamma_{n+1}\over \gamma_n-\gamma_{n+1}}, \quad n\ge 1,\label{2}\\
\gamma_0-\gamma_1&\le & \alpha_1\gamma_0^2.\label{3}
\end{eqnarray}
 Then for $${P}_n(x)={p_n(x)\over p_n(1)}$$ we have
 $${P}_n(x)^2-{P}_{n-1}(x){P}_{n+1}(x)\ge 0,\qquad -1\le x\le 1.$$
\end{theorem}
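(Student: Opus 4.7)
The plan is to reduce the statement to Theorem~1 of \cite{szw}, which asserts exactly the Tur\'an inequality $P_n^2 - P_{n-1}P_{n+1}\ge 0$ whenever the recurrence is written for polynomials normalized by $p_n(1)=1$ and its $\alpha$-coefficient is nondecreasing and bounded by $\tfrac12$. Introduce the ratios $r_n = p_n(1)/p_{n-1}(1)$; evaluating (\ref{1}) at $x=1$ gives $r_1 = 1/\gamma_0$ and $r_{n+1}=(r_n-\alpha_n)/(\gamma_n r_n)$. Then $P_n=p_n/p_n(1)$ satisfies
\[
xP_n = \tilde\gamma_n P_{n+1}+\tilde\alpha_n P_{n-1},\qquad \tilde\alpha_n = \alpha_n/r_n,\quad \tilde\gamma_n = \gamma_n r_{n+1}=1-\tilde\alpha_n,
\]
along with the one-step recursion $\tilde\alpha_{n+1} = \alpha_{n+1}\gamma_n/(1-\tilde\alpha_n)$. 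After this transformation it suffices to verify (i) $\tilde\alpha_n\le \tfrac12$ and (ii) $\tilde\alpha_n\le\tilde\alpha_{n+1}$ for all $n\ge 1$.

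Property (i) is cheap: hypothesis (c) applied at $n=0$ gives $\gamma_0\le 1$, so $r_1\ge 1$; and $r_n\ge 1$ implies $1-\tilde\alpha_n\ge 1-\alpha_n\ge\gamma_n$ by (a) and (c), whence $r_{n+1} = (1-\tilde\alpha_n)/\gamma_n\ge 1$. Consequently $\tilde\alpha_n\le\alpha_n\le\tfrac12$ throughout. For the base case of (ii), the inequality $\tilde\alpha_2\ge \tilde\alpha_1$ unwinds through the recursion to $\alpha_2\gamma_1\ge \alpha_1\gamma_0(1-\alpha_1\gamma_0)$; but (\ref{3}) is precisely $\gamma_1\ge \gamma_0(1-\alpha_1\gamma_0)$, and combining this with $\alpha_2\ge\alpha_1$ from (a) produces the desired bound.

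The inductive step $\tilde\alpha_n\le\tilde\alpha_{n+1}$ from $\tilde\alpha_{n-1}\le\tilde\alpha_n$ is where condition (\ref{2}) does its work. The four quantities $\alpha_n-\alpha_{n-1}$, $\alpha_n\gamma_{n-1}-\alpha_{n-1}\gamma_n$, $\alpha_{n+1}\gamma_n-\alpha_n\gamma_{n+1}$ and $\gamma_n-\gamma_{n+1}$ are all positive by (a) and (b), so (\ref{2}) cross-multiplies to a four-term product inequality. The plan is to rewrite this inequality using the identities $\tilde\alpha_n = \alpha_n\gamma_{n-1}/\tilde\gamma_{n-1}$ and $\tilde\gamma_n\tilde\gamma_{n-1} = \tilde\gamma_{n-1}-\alpha_n\gamma_{n-1}$, both derived from the recursion for $r_n$, and then to insert the induction hypothesis $\tilde\alpha_{n-1}\le\tilde\alpha_n$ (equivalently $\alpha_n\gamma_{n-1}\ge\tilde\alpha_{n-1}\tilde\gamma_{n-1}$) at the appropriate place so that the inequality collapses to $\alpha_{n+1}\gamma_n\ge\tilde\alpha_n\tilde\gamma_n$, which is exactly $\tilde\alpha_n\le\tilde\alpha_{n+1}$.

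The principal obstacle will be this algebraic reformulation: condition (\ref{2}) mixes first differences of $\alpha$ and $\gamma$ in a nonobvious way, and isolating the comparison between $\tilde\alpha_n$ and $\tilde\alpha_{n+1}$ requires careful bookkeeping, with the induction hypothesis used precisely to clear the denominators $\tilde\gamma_{n-1}$ that appear after substitution. Once this step is completed and both (i) and (ii) are secured, Theorem~1 of \cite{szw} applied to the polynomials $P_n$ yields the Tur\'an inequality asserted.
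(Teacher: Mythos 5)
Your overall strategy is the same as the paper's: renormalize, show that the new recurrence coefficients $\tilde\alpha_n$ are $\le\tfrac12$ and nondecreasing, and invoke Theorem~1 of \cite{szw}. Your part (i) (via $r_n\ge 1$) and your base case $\tilde\alpha_1\le\tilde\alpha_2$ (via (\ref{3}) and $\alpha_1\le\alpha_2$) are correct and coincide with the paper's Lemma~1 (left inequality) and the start of its Lemma~2. The problem is the inductive step, which you explicitly leave as ``the principal obstacle'': it is not merely unfinished bookkeeping, it cannot be closed from the induction hypothesis you propose. The hypothesis $\tilde\alpha_{n-1}\le\tilde\alpha_n$ together with (a)--(c) and (\ref{2}) at index $n$ does \emph{not} imply $\tilde\alpha_n\le\tilde\alpha_{n+1}$. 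Concretely, take $\alpha_{n-1}=0.3$, $\alpha_n=0.4$, $\alpha_{n+1}=0.41$, $\gamma_{n-1}=0.6$, $\gamma_n=0.5$, $\gamma_{n+1}=0.498$ and $r_{n-1}=15/14$; then (a)--(c) hold, the two sides of (\ref{2}) are $10/9\le 2.9$, and $\tilde\alpha_{n-1}=0.28\le\tilde\alpha_n=1/3$, yet $r_n=1.2$, $r_{n+1}=4/3$ and $\tilde\alpha_{n+1}=0.3075<\tilde\alpha_n$. So the single inequality $\tilde\alpha_{n-1}\le\tilde\alpha_n$ carries too little information about $r_n$ for (\ref{2}) to do its work.

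What must be carried along in the induction is a quantitative \emph{upper} bound on the ratios, namely $r_{n+1}\le(\alpha_{n+1}\gamma_n-\alpha_n\gamma_{n+1})/(\gamma_n-\gamma_{n+1})$; condition (\ref{2}) is exactly the statement that this bound propagates under the map $x\mapsto\frac{1}{\gamma_n}\left(1-\frac{\alpha_n}{x}\right)$, with (\ref{3}) giving the base case (this is the paper's Lemma~1). From that bound one shows the maps are pointwise decreasing in the index $k$ on the relevant range, whence the sequence $r_n$ itself is nonincreasing (the paper's Lemma~2), and only then does $\tilde\alpha_n=\alpha_n/r_n$ become nondecreasing, as a quotient of an increasing numerator by a nonincreasing denominator $\ge 1$. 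In the numerical data above the bound reads $r_n\le 0.9$, which is violated; that is precisely the information your induction discards. To repair the proof you should strengthen your induction hypothesis to include this upper bound on $r_n$, or equivalently prove the two lemmas just described before comparing $\tilde\alpha_n$ with $\tilde\alpha_{n+1}$.
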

\begin{proof}
Let $$g_n={p_{n+1}(1)\over p_n(1)}.$$
By (\ref{1}) we get
\begin{equation}\label{5}
g_n={1\over \gamma_n}\left (1-{\alpha_n\over g_{n-1}}\right ), \quad n\ge 1.
\end{equation}
\begin{lemma} Under assumptions of Theorem 1 there holds
\begin{equation}\label{6}
1\le g_n\le {\alpha_{n+1}\gamma_n-\alpha_n\gamma_{n+1}\over \gamma_n-\gamma_{n+1}}, \quad n\ge 0.
\end{equation}
\end{lemma}
\begin{proof}
(\ref{1}) gives $g_0=1/\gamma_0\ge 1.$ Assume $g_{n-1}\ge 1$ for $n\ge 1.$ By (\ref{5}) and (c) we get
$$g_n\ge {1\over \gamma_n}\left (1-\alpha_n\right )\ge 1.$$ This shows the left hand side inequality. 

By (\ref{3}) we get
$$g_0={1\over \gamma_0}\le {\alpha_1\gamma_0\over \gamma_0-\gamma_1},$$
which shows the right hand side inequality in (\ref{6}) for $n=0.$ Assume (\ref{6}) holds for some $n\ge 0.$ Then, in view of (\ref{5}) and (\ref{2}), we get
\begin{multline*}
g_{n+1}={1\over \gamma_{n+1}}\left (1-{\alpha_{n+1}\over g_n}\right )\le 
{1\over \gamma_{n+1}}\left ( 1-{\alpha_{n+1}(\gamma_n-\gamma_{n+1})\over \alpha_{n+1}\gamma_n-\alpha_n\gamma_{n+1}}\right )\\
= {\alpha_{n+1}-\alpha_n\over \alpha_{n+1}\gamma_n-\alpha_n\gamma_{n+1}}\le 
{\alpha_{n+2}\gamma_{n+1}-\alpha_{n+1}\gamma_{n+2}\over \gamma_{n+1}-\gamma_{n+2}}.
\end{multline*}
\end{proof}
\begin{lemma}
Under the assumptions of Theorem 1 the sequence $g_n=p_{n+1}(1)/p_n(1)$ is nonincreasing.
\end{lemma}
\begin{proof}
Let
$$f_k(x)={1\over \gamma_k}\left (1-{\alpha_k\over x}\right ),\quad x\ge 1.$$ The functions $f_k$ are nondecreasing. Moreover by a straightforward computation we get
\begin{equation}\label{f}
f_{k+1}(x)\le f_k(x),\qquad 1\le x\le {\alpha_{k+1}\gamma_k-\alpha_k\gamma_{k+1}\over \gamma_k-\gamma_{k+1}}.
\end{equation}
We have $$g_0={1\over \gamma_0},\quad g_1={1\over \gamma_1}(1-\alpha_1\gamma_0).$$ By (\ref{3}) we get $g_0\ge g_1.$ Assume $g_{n-1}\ge g_n.$
Then in view of (\ref{5}) and Lemma 1 we obtain
$$g_{n+1}=f_{n+1}(g_n)\le f_n(g_n)\le f_n(g_{n-1})=g_n.$$
\end{proof}
The polynomials ${P}_n$ satisfy
$$x{P}_n=\widetilde{\gamma}_n{P}_{n+1}+
\widetilde{\alpha}_n{P}_{n-1},\quad n\ge 0,$$
where
$$\widetilde{\alpha}_n=\alpha_n {p_{n-1}(1)\over p_n(1)},\qquad \widetilde{\gamma}_n=\gamma_n {p_{n+1}(1)\over p_n(1)}.$$
Since ${P}_n(1)=1$ we get
$$ \widetilde{\alpha}_n+\widetilde{\gamma}_n=1.$$ Moreover by Lemma 1,  Lemma 2 and (a)  the sequence 
$\widetilde{\alpha}_n$ is nondecreasing and 
$\widetilde{\alpha}_n\le 1/2.$ Thus the conclusion follows from Theorem 1(i) of \cite{szw}.
\end{proof}
\begin{rem} {\rm
As a side effect of Theorem 1 we get that the polynomials $p_n$ admit nonnegative linearization as the polynomials ${P}_n$ satisfy the assumptions of Theorem 1 in \cite{szw0}. We refer to \cite{szw1} where this  problem is discussed in detail.}
\end{rem}

The assumption (6) in Theorem 1 can be troublesome for verification in examples. However there is a wide class of examples for which (6) simplifies substantially. 
\begin{cor}
Let the polynomials $p_n$ satify (\ref{1}) with
$$\alpha_n={1\over 2}-\alpha \delta_n, \qquad \gamma_n={1\over 2} +\gamma \delta_n,\quad n\ge 0.$$ where
$\alpha\ge \gamma> 0$ and $\delta_n\searrow 0.$ 
Then the conclusion of Theorem 1 holds. 
\end{cor}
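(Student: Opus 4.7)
The plan is to verify each hypothesis of Theorem 1 in turn, exploiting the special affine form of the coefficients. Conditions (a)--(c) are immediate: strict monotonicity of $\alpha_n$ and $\gamma_n$ follows from $\delta_n\searrow 0$ with $\alpha,\gamma>0$; the bound $\alpha_n\le\tfrac12$ and the positivity of $\gamma_n$ are clear; and $\alpha_n+\gamma_n = 1-(\alpha-\gamma)\delta_n\le 1$ because $\alpha\ge\gamma$.

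The key observation is that inequality (\ref{2}) collapses to a constant inequality. A short computation gives $\alpha_n-\alpha_{n-1}=\alpha(\delta_{n-1}-\delta_n)$ and $\gamma_n-\gamma_{n+1}=\gamma(\delta_n-\delta_{n+1})$, while the two cross-difference expressions $\alpha_n\gamma_{n-1}-\alpha_{n-1}\gamma_n$ and $\alpha_{n+1}\gamma_n-\alpha_n\gamma_{n+1}$ both factor as $\tfrac{\alpha+\gamma}{2}$ times the corresponding telescoping differences $\delta_{n-1}-\delta_n$ and $\delta_n-\delta_{n+1}$. These telescoping factors cancel in the ratios appearing in (\ref{2}), so the inequality reduces to the constant statement $\tfrac{2\alpha}{\alpha+\gamma}\le\tfrac{\alpha+\gamma}{2\gamma}$, which is nothing but $(\alpha-\gamma)^2\ge 0$.

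For (\ref{3}), the only delicate point is reconciling the convention $\alpha_0=0$ built into (\ref{1}) with the parametric formula $\alpha_0=\tfrac12-\alpha\delta_0$; this forces $\delta_0=1/(2\alpha)$ and hence $\gamma_0=(\alpha+\gamma)/(2\alpha)$. Then $\gamma_0-\gamma_1=\gamma(\delta_0-\delta_1)=(\gamma/\alpha)\alpha_1$, and after dividing by $\alpha_1>0$ (positive because $\delta_1<\delta_0=1/(2\alpha)$ by strict monotonicity) inequality (\ref{3}) becomes $\gamma/\alpha\le\gamma_0^2$, i.e.\ $(\alpha+\gamma)^2\ge 4\alpha\gamma$, again equivalent to $(\alpha-\gamma)^2\ge 0$. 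Thus the two seemingly awkward conditions of Theorem 1 both collapse to the same trivial inequality, and the conclusion follows directly from Theorem 1; there is no hard step, only the bookkeeping of the $\alpha_0$ convention just described.
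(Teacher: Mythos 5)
Your proposal is correct and follows essentially the same route as the paper: both reduce (\ref{2}) to the constant inequality $2\alpha/(\alpha+\gamma)\le(\alpha+\gamma)/(2\gamma)$ via the factorization $\alpha_{n+1}\gamma_n-\alpha_n\gamma_{n+1}=\tfrac12(\alpha+\gamma)(\delta_n-\delta_{n+1})$, and both verify (\ref{3}) using $\alpha\delta_0=\tfrac12$. The only cosmetic difference is that the paper checks (\ref{3}) via $\left(\tfrac12+\gamma\delta_0\right)^2\ge 2\gamma\delta_0$ while you divide by $\alpha_1$ and land on $(\alpha-\gamma)^2\ge 0$; these are the same inequality in disguise.
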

\begin{proof}
We have
$$ \alpha_{n+1}\gamma_n-\alpha_n\gamma_{n+1}={1\over 2}(\alpha+\gamma)(\delta_n-\delta_{n+1}), \quad n\ge 0.$$ Thus (\ref{2}) takes the form
$${2\alpha\over \alpha+\gamma}\le {\alpha+\gamma\over 2\gamma},$$ which is true for any  numbers $\alpha, \gamma >0.$

Next, since  $$0=\alpha_0={1\over 2}-\alpha\delta_0$$ we get $\alpha\delta_0= 1/2.$ Thus 
$$
\alpha_1\gamma_0^2=\left ({1\over 2}-{1\over 2} {\delta_1\over \delta_0}\right )\left ({1\over 2} +\gamma\delta_0\right )^2\ge {\delta_0-\delta_1\over 2\delta_0} \,2\gamma\delta_0=\gamma_0-\gamma_1.
$$
Therefore all the assumptions of Theorem 1 are satisfied.
\end{proof}
\begin{example}{\rm 
Consider the symmetric Pollaczek polynomials
$P_n^\lambda(x;a).$ They are orthogonal in the interval $[-1,1]$
and satisfy the recurrence relation
$$
xP_n^\lambda(x;a)= {n+1\over 2(n+\lambda+a)}
P_{n+1}^\lambda(x;a) + {n+2\lambda -1\over 2(n+\lambda
+a)}P_{n-1}^\lambda(x;a),$$
where the parameters satisfy $a>0,\ \lambda >0.$
Set
$$p_n(x)= {n!\over (2\lambda)_n}P_n^\lambda(x;a),$$
where $(\mu)_n=\mu(\mu+1)\ldots (\mu +n-1).$
Then the polynomials $p_n$ satisfy the recurrence relation
$$xp_n=
{n+2\lambda\over 2(n+\lambda+a)}p_{n+1}+
{n\over 2(n+\lambda+a)}p_{n-1}.$$
Observe that the  assumptions of
Corollary 1(i) of \cite{szw} are satisfied for $a\ge \lambda.$ 

\begin{rem}\rm  There is a misprint  in the formulation of Corollary 1 in \cite{szw}. The assumptions there required that
$$\lim_n\alpha_n ={1\over 2}\widetilde{a},\qquad \lim_n\gamma_n={1\over 2}\widetilde{a}^{-1}$$ with $0<\widetilde{a}<1.$ But the conclusion holds also for $\widetilde{a}=1$ with the same proof as in \cite{szw}. For symmetric Pollaczek polynomials we actually have $\widetilde{a}=1.$
\end{rem}
However for $\lambda>a$ the assumptions  of Corollary 1(ii) \cite{szw} are not satisfied  as was wrongly stated in \cite{szw}, because $\gamma_0<1.$  Instead we can apply  Corollary 1, on the previous page, with
$$\alpha=\lambda+a,\quad \gamma =\lambda-a,\quad \delta_n={1\over 2(n+\lambda+a)}.$$}
\end{example}
\begin{rem}{\rm Corollary 1 requires $\alpha\delta_0={1\over 2},$ i.e. the quantity $\delta_0$ is determined by $\alpha,$ which limits the range of examples. We will  get rid of that  assumption in the next corollary, allowing some flexibility for the quantity $\delta_0$. }
\end{rem}
\begin{cor}
Let the polynomials $p_n$ satify (\ref{1}) with
$$\alpha_0=0, \qquad \gamma_0={1\over 2}+\gamma\delta_0,$$
$$\alpha_n={1\over 2}-\alpha \delta_n, \qquad \gamma_n={1\over 2} +\gamma \delta_n,\quad n\ge 1,$$ where
$\alpha\ge \gamma> 0$ and $\delta_n\searrow 0.$ Assume also that
\begin{equation}\label{strange}{3\gamma-\alpha\over 2\gamma(\alpha+\gamma)}\le\delta_0\le  {1\over 2\alpha}.
\end{equation}
Then the conclusion of Corollary 1 holds. 
\end{cor}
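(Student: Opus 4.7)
The plan is to verify all five hypotheses of Theorem~1 for the given sequences $\alpha_n$ and $\gamma_n$, and the bulk of the work falls into the two conditions (\ref{2}) and (\ref{3}). Monotonicity and positivity in (b) are immediate from $\delta_n\searrow 0$. For (a), the only point needing care is strict monotonicity at $n=0$: $\alpha_1=\tfrac12-\alpha\delta_1>\tfrac12-\alpha\delta_0\ge 0=\alpha_0$, using the upper bound $\delta_0\le 1/(2\alpha)$ from (\ref{strange}). For (c), when $n\ge 1$ we have $\alpha_n+\gamma_n=1+(\gamma-\alpha)\delta_n\le 1$ since $\gamma\le\alpha$, and $\alpha_0+\gamma_0=\tfrac12+\gamma\delta_0\le\tfrac12+\alpha\delta_0\le 1$, again by $\delta_0\le 1/(2\alpha)$.

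For inequality (\ref{2}), I would split on $n$. When $n\ge 2$, every index involved is at least $1$, and the formulas from the proof of Corollary~1 apply verbatim to reduce (\ref{2}) to $\frac{2\alpha}{\alpha+\gamma}\le\frac{\alpha+\gamma}{2\gamma}$, which is just $(\alpha-\gamma)^2\ge 0$. The delicate case is $n=1$: since $\alpha_0=0$, the left-hand side of (\ref{2}) collapses to $\alpha_1/(\alpha_1\gamma_0)=1/\gamma_0$, while the right-hand side still evaluates to $(\alpha+\gamma)/(2\gamma)$ by the same computation. A direct rearrangement shows the resulting inequality $1/\gamma_0\le(\alpha+\gamma)/(2\gamma)$ is equivalent, after substituting $\gamma_0=\tfrac12+\gamma\delta_0$, to the lower bound $\delta_0\ge(3\gamma-\alpha)/(2\gamma(\alpha+\gamma))$ in (\ref{strange}). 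So both endpoints of (\ref{strange}) appear naturally: the upper bound in (a) and (c), and the lower bound in the $n=1$ instance of (\ref{2}).

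The main obstacle I expect is (\ref{3}), and the trick should be to notice it is linear in $\delta_1$. Define $h(\delta_1):=\alpha_1\gamma_0^2-(\gamma_0-\gamma_1)=\tfrac12\gamma_0^2-\gamma\delta_0+\delta_1(\gamma-\alpha\gamma_0^2)$. If $\gamma\le\alpha\gamma_0^2$, then $h$ is non-increasing in $\delta_1$ on $[0,\delta_0]$, so $h(\delta_1)\ge h(\delta_0)=(\tfrac12-\alpha\delta_0)\gamma_0^2\ge 0$ by $\delta_0\le 1/(2\alpha)$. Otherwise $h$ is non-decreasing, so $h(\delta_1)\ge h(0)=\tfrac12\gamma_0^2-\gamma\delta_0$; expanding $\gamma_0=\tfrac12+\gamma\delta_0$ and simplifying turns this into $\tfrac12(\gamma\delta_0-\tfrac12)^2$, which is nonnegative. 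Either way (\ref{3}) holds, and Theorem~1 then delivers the conclusion.
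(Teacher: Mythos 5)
Your proposal is correct and follows essentially the same route as the paper: both verify the hypotheses of Theorem 1 directly, with the $n=1$ instance of (\ref{2}) reducing to $1/\gamma_0\le(\alpha+\gamma)/(2\gamma)$, i.e.\ exactly the lower bound in (\ref{strange}), and the upper bound $\delta_0\le 1/(2\alpha)$ carrying the remaining checks. The only local difference is in verifying (\ref{3}): you minimize the affine function of $\delta_1$ over $[0,\delta_0]$ at its endpoints, whereas the paper bounds $\alpha\delta_1\le\delta_1/(2\delta_0)$ and then uses $\left(\tfrac12+\gamma\delta_0\right)^2\ge 2\gamma\delta_0$; both arguments are elementary and equally short.
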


\begin{rem}{\rm 
The condition $\delta_0\le 1/(2\alpha)$ is not artificial. Instead of setting $\alpha_0=0$ we could define
$$\alpha_0={1\over 2}-\alpha\delta_0.$$ The aformentioned assumption amounts to the condition $\alpha_0\ge 0.$

Observe also that the possible range for the quantity $\delta_0$ described in (\ref{strange}) is nonempty as
we always have
$${3\gamma-\alpha\over 2\gamma(\alpha+\gamma)}\le  {1\over 2\alpha}.$$}
\end{rem}
\begin{proof}
We are forced to modify the proof of the preceding corollary at  places where $\delta_0$ shows up, as $\alpha_0=0$ is no longer equal ${1\over 2}-\alpha\delta_0.$ Thus we have to make calculations concerning   (\ref{2}), for $n=1,$  and (\ref{3}), by hand.  Since $\alpha\delta_0\le {1\over 2}$ we get
$$\alpha_1\gamma_0^2\ge \left ({1\over 2}-{1\over 2} {\delta_1\over \delta_0}\right )\left ({1\over 2} +\gamma\delta_0\right )^2\ge {\delta_0-\delta_1\over 2\delta_0} \,2\gamma\delta_0=\gamma_0-\gamma_1.
$$ This gives (\ref{3}).
Next  we verify (\ref{2}) for $n=1,$ as the value $\delta_0$ is involved there on the left hand side. The inequality (\ref{2}) in this case reduces to
$${1\over \gamma_0}={2\over 1+2\gamma\delta_0}\le {\alpha+\gamma\over 2\gamma}.$$
This inequality is equivalent to the left hand side of (\ref{strange}).
\end{proof}
\begin{rem}\label{long}
\rm Corollary 1 requires that the sequence
\begin{equation}\label{7}
{\gamma_n-{1\over 2}\over {1\over 2}-\alpha_n}, \quad n\ge 1
\end{equation} is constant.
It is possible to  extend Corollary 1 to the case when the sequence in (\ref{7}) is  nondecreasing. Indeed
\begin{multline}\label{last}
\alpha_{n+1}\gamma_n-\alpha_n\gamma_{n+1} =\left [\left (\gamma_{n+1}-\textstyle{1\over 2}\right )\left (\textstyle{1\over 2}-\alpha_n\right )-\left (\gamma_{n}-\textstyle{1\over 2}\right )\left (\textstyle{1\over 2}-\alpha_{n+1}\right )\right ]
\\ +\textstyle{1\over 2}(\alpha_{n+1}-\alpha_n +\gamma_n-\gamma_{n+1}) \ge \textstyle {1\over 2}(\alpha_{n+1}-\alpha_n +\gamma_n-\gamma_{n+1}).
\end{multline}
Denote
\begin{equation}\label{uv}
u_n=\alpha_{n+1}-\alpha_n,\quad v_n=\gamma_n-\gamma_{n+1}.
\end{equation} By (\ref{last}) the assumption (\ref{2}) will be satisfied if
\begin{equation}\label{8}
(u_{n-1}+v_{n-1})(u_n+v_n)\ge 4u_{n-1}v_n.
\end{equation}
Let 
\begin{equation}\label{lambda}
v_k=\lambda_ku_k, \quad 0<\lambda_k\le 1.
\end{equation} Then (\ref{8}) takes the form
$$
(1+\lambda_{n-1})(1+\lambda_n)\ge 4\lambda_n
$$
i.e.
\begin{equation}\label{9}
\lambda_n\le {1+\lambda_{n-1}\over 3-\lambda_{n-1}}.
\end{equation}
Let
$$f(x)={1+x\over 3-x},\quad 0\le x\le 1.$$ The condition (\ref{9}) amounts to 
\begin{equation}\label{f}
\lambda_n\le f(\lambda_{n-1}).
\end{equation}
Thus (\ref{f}) implies (\ref{2}), provided that
the sequence in (\ref{7}) is nondecreasing.
As $f(x)\ge {1\over 3},$ the inequality
 (\ref{f}), and consequently (\ref{2}),  is satisfied whenever $\lambda_n\le 1/3.$
 Observe that for $y\ge 1$ we have
\begin{equation}\label{10}
 f\left ({y-1\over y+1}\right )={y\over y+2}.
\end{equation}
\end{rem}
Remark \ref{long} gives rise to new examples.
\begin{example}\rm  For $\varepsilon_n\searrow 0,$ $\delta_n\searrow \delta\ge 0,$ let
$$\alpha_n={1\over 2}-3\varepsilon_n(1+\delta_n),\qquad \gamma_n={1\over 2}+{ \varepsilon_n}, \qquad n\ge 0.$$
Then
$${\gamma_n-{1\over 2}\over {1\over 2}-\alpha_n}={1\over 3(1+\delta_n)}\nearrow {1\over 3(1+\delta)}$$ and (see (\ref{uv}) and (\ref{lambda}))
$$\lambda_n={\varepsilon_n-\varepsilon_{n+1}\over {3(\varepsilon_n-\varepsilon_{n+1})+3(\varepsilon_n\delta_n-\varepsilon_{n+1}\delta_{n+1})}}\le {1\over 3}.$$
Next
$$1+\delta_1\le 1+\delta_0={1\over 6\varepsilon_0}$$ (the last equality follows from $\alpha_0=0$).
Then
\begin{multline*}
\alpha_1\gamma_0^2=\left [{1\over 2}-3\varepsilon_1(1+\delta_1)\right ]\,\left ({1\over 2}+\varepsilon_0\right )^2 \\ \ge {1\over 2}\left ( 1-{\varepsilon_1\over \varepsilon_0}\right )\left ({1\over 2}+\varepsilon_0\right )^2
\ge {1\over 2}\left (1-{\varepsilon_1\over \varepsilon_0} \right )\, 2\varepsilon_0=\gamma_0-\gamma_1.
\end{multline*}
This gives (\ref{3}).
\end{example}

\begin{example}\label{exm}
\rm  For $a>0$  let
$$\alpha_n={1\over 2}-{a\over 2(n+a)},\qquad
\gamma_n={1\over 2} +{a\over 2(n+a+1)}.$$ Then the sequence in (\ref{7}) is increasing. Furthermore (cf. (\ref{uv}) and (\ref{lambda}))
$$ u_n={a\over 2(n+a)(n+a+1)},\quad v_n={a\over 2(n+a+1)(n+a+2)},\quad \lambda_n= {n+a\over n+a+2}.$$
By (\ref{10}) we have
$f(\lambda_{n-1}) =\lambda_n.$ Thus   (\ref{2}) is satisfied.   Next
\begin{align*}
&\gamma_0-\gamma_1=v_0={a\over 2(a+1)(a+2)}\le {a\over 2(a+1)^2}, \\ &\alpha_1\gamma_0^2={(2a+1)^2\over 8(a+1)^3}.
\end{align*}
As
$$
(2a+1)^2\ge 4(a+1)a,
$$ we get
$$\alpha_1\gamma_0^2\ge {a\over 2(a+1)^2}\ge \gamma_0-\gamma_1,$$
so
the condition (\ref{3}) is also satisfied.
\end{example}

\begin{rem}\label{rm}\rm
\ Let $$\lambda_n={y_n-1\over y_n+1}.$$  Then
\begin{equation}\label{y}
y_n={1+\lambda_n\over 1-\lambda_n}.
\end{equation}
 Moreover condition (\ref{9}) is equivalent to
\begin{equation}\label{in}
y_n\le y_{n-1}+1.
\end{equation}
\end{rem}

Using Remark \ref{rm} we can still generalize Example \ref{exm}.
\begin{example}\rm  For $a >0, \ b\ge 0$  let
$$\alpha_n={1\over 2}-{a\over 2(n+a)},\qquad
\gamma_n={1\over 2} +{a\over 2(n+a+b+1)}.$$  The sequence in (\ref{7}) is increasing. Next
\begin{align*}
&u_n={a\over 2(n+a)(n+a+1)},\\
& v_n={a\over 2(n+a+b+1)(n+a+b+2)},\\ &\lambda_n= {(n+a)(n+a+1)\over (n+a+b+1)(n+a+b+2)}.
\end{align*} 
By (\ref{y}) we get
$$y_n={n\over b+1}+{2a+b+2\over 2(b+1)}+{b^2+2b\over 2(b+1)(2n+2a+b+2)}.$$
Since $b\ge 0,$ the inequality (\ref{in}) holds. Next
\begin{align*}
&\gamma_0-\gamma_1=v_0= {a\over 2(a+b+1)(a+b+2)}\le {a\over 2(a+b+1)^2},\\
&\alpha_1\gamma_0^2={(2a+b+1)^2\over 8(a+1)(a+b+1)^2}\ge {(2a+1)^2\over 8(a+1)(a+b+1)^2} \ge {a\over 2(a+b+1)^2} .
\end{align*}
Thus
 (\ref{3}) is fulfilled.
\end{example}

{\bf Acknowledgment} I am grateful to the referees. Their remarks improved the exposition substantially.

\obeylines{
Institute of Mathematics
Wroc{\l}aw University
pl. Grunwaldzki 2/4
50--384 Wroc{\l}aw, Poland
{\tt ryszard.szwarc@uwr.edu.pl}}

\begin{thebibliography}{10}
\bibitem{asc}
W. Al-Salam,  L. Carlitz,  General Tur\'an expressions for certain
hypergeometric series, Portugal. Math. {\bf 16}, (1957),  119--127.
\bibitem{a1}
R. Askey, Linearization of the product of orthogonal polynomials,
Problems in Analysis, R. Gunning, ed., Princeton University Press,
Princeton, N.J., (1970) 223--228.
\bibitem{bi} J. Bustoz, M. E. H.  Ismail, Tur\'{a}n inequalities for
ultraspherical and continuous $q$--ultraspherical polynomials, SIAM J.
Math. Anal. {\bf 14} (1983), 807--818.
\bibitem{bi1} J. Bustoz, M. E. H.  Ismail,  Tur\'{a}n inequalities for
symmetric orthogonal polynomials, Internat. J. Math. Math. Sci. {\bf 20} (1997), no. 1, 1--7.
\bibitem{ch} T. Chihara, An Introduction to Orthogonal Polynomials, Mathematics and Its Applications, Vol. 13, Gordon 6 Breach, New York, 1978.
\bibitem{g1}
G. Gasper,  On the extension of Tur\'{a}n's inequality to Jacobi
polynomials, Duke. Math. J. {\bf 38} (1971), 415--428.
\bibitem{g2} G. Gasper, An inequality of Tur\'{a}n type for Jacobi
polynomials, Proc. Amer. Math. Soc. {\bf 32}  (1972), 435--439.
\bibitem{ks}
S. Karlin, G. Szeg\"o,  On certain determinants whose elements are
orthogonal polynomials, J. d'Analyse Math. {\bf 8} (1960/61), 1--157.
\bibitem{mane} A. M\'at\'{e}, P. Nevai,  Orthogonal polynomials and absolutely continuous measures, In C. K. Chui et. al., editor, Approximation Theory IV, Vol. 103, Academic Press, New York,   (1983), 611--617.
\bibitem{mn}
B. N. Mukherjee, T. S. Nanjundiah,  On an inequality relating to Laguerre
and Hermite polynomials, Math. Student. {\bf 19} (1951), 47--48.
\bibitem{ne} P. Nevai, Orthogonal Polynomials, Memoirs Amer. Math. Soc. 18
No. 213 (1979).
\bibitem{sk}
H. Skovgaard,  On inequalities of the Tur\'an type, Math.  Scand.  {\bf 2} (1954),
65--73.
\bibitem{szasz}
O. Sz\'asz,  Identities and inequalities concerning orthogonal
polynomials and Bessel functions, J. d'Analyse Math.  {\bf 1} (1951), 116--134.
\bibitem{szego} G. Szeg\"o,   Orthogonal Polynomials, 4th Ed. Amer. Math. Soc. Colloq. Publ. Vol. 23, 1975.
\bibitem{szego1}
G. Szeg\"o On an inequality of P. Tur\'an concerning Legendre
polynomials, Bull. Amer. Math. Soc. {\bf 54} (1948), 401--405.
\bibitem{szego2}
G. Szeg\"o, An inequality for Jacobi polynomials, Studies in Math.
Anal.  and Related Topics, Stanford Univ.  Press, Stanford, Calif.,
(1962), 392--398.
\bibitem{szw0} R. Szwarc,  Orthogonal polynomials and a discrete boundary value problem I, SIAM Journal on Mathematical Analysis {\bf 23} (1992), 959--964.
\bibitem{szw} R.  Szwarc, Positivity of Tur\'{a}n determinants for orthogonal polynomials. In: Harmonic Analysis and Hypergroups, Eds. K.A. Ross et al.,
Birkhauser, Boston-Basel-Berlin, 1998, 165--182
\bibitem{szw1} R. Szwarc,  Orthogonal polynomials and Banach algebras, "Inzell Lectures on Orthogonal Polynomials,", W. zu Castell, F. Filbir, B. Forster (eds.), Advances in the Theory of Special Functions and Orthogonal Polynomials Nova Science Publishers, vol. 2 (2005), 103-139.
\bibitem{tn}
V. R. Thiruvenkatachar, T. S. Nanjundiah,  Inequalities concerning Bessel
functions and orthogonal polynomials, Proc.  Indian Acad.  Sci.  {\bf
33} (1951) 373--384.
\bibitem{tu}
P. Tur\'an, On the zeros of the polynomials of Legendre, \v{C}asopis
\v{P}est. Mat. {\bf 75} (1951) 113--122. 
\end{thebibliography}
\end{document}